\tikzstyle{vertex}=[draw,fill=black!15,circle,minimum size=10pt,inner sep=0pt]
\newcommand{\HE}{}
\newcommand{\DoS}{}
\newcommand{\DoA}{}
\newcommand{\Names}{Bryan Curtis, Jonathan Earl, David Livingston, and Bryan Shader}
\newcommand{\Title}{Resolution of conjectures related to LIGHTS Out!\; and Cartesian Products}
\newtheorem{example}[theorem]{Example}
\definecolor{red}{rgb}{1,0,0}
\begin{document}

\bibliographystyle{plain}


\thispagestyle{empty}

 \title{\Title\thanks{Received
 by the editors on \DoS.
 Accepted for publication on \DoA.
 Handling Editor: \HE.}}
\author{ Bryan Curtis, Jonathan Earl, David Livingston, Bryan Shader\thanks{Mathematics Department,
University of Wyoming, Laramie, WY 82071, (bcurtis6@uwyo.edu, jearl5@uwyo.edu, dliving5@uwyo.edu, bshader@uwyo.edu).}}

\markboth{\Names}{\Title}

\maketitle


\begin{abstract}
Lights Out!\ is a game played on a $5 \times 5$ grid of lights, or more generally on a graph. Pressing lights on the grid allows the player to turn off neighboring lights. The goal of the game is to start with a given initial configuration of lit lights and reach a state where all lights are out. Two conjectures posed in a recently published paper about Lights Out!\ on Cartesian products of graphs are resolved.
\end{abstract}

\begin{keywords}
    Matrix,
    Graph, Lights Out!, Sylvester Equation.
\end{keywords}
\begin{AMS}
         05C50,
         15A15,
         15A03,
         15B33.
\end{AMS}

\section{Introduction}
In this short note, we resolve two conjectures from \cite{GPY} concerning Lights Out!\  on Cartesian products of graphs. 

We begin by providing the setting for  the Lights Out!\ problem for a simple graph $G$ with vertices $1,2, \ldots, n$. 
Associated with each vertex of $G$ is a light and a button. If the button at a vertex $i$ is pressed, the lights of the neighbors of $i$ toggle on or off. If a vertex is considered to be a neighbor of itself, this is called {\it closed neighborhood switching}. If not, this is called {\it open neighborhood switching}. Initially, some subset of vertices have their lights on and the complementary set has their lights off. This initial configuration can be represented by the $n \times 1$ vector $b=[b_i]$ where $b_i=1$ if the light at vertex $i$ is initially on, and $b_{j}=0$ if the light at vertex $j$ is initially off. The goal of the Lights Out!\ problem is to press a sequence of buttons so that at the end of the sequence all lights are off. 

First consider the open neighborhood switching, and let $A_G$ be the adjacency matrix of $G$, that is, $A_G=[a_{ij}]$ is the $n \times n$ matrix with $a_{ij}=1$ if $i$ is adjacent to $j$ in $G$, and $a_{ij}=0$ otherwise. Let $e_1, \ldots, e_n$ denote the standard basis vectors. If we start with the configuration corresponding to $b$ and press the button at vertex $i$, then the resulting configuration of lights corresponds to the vector $b+A_Ge_i \bmod 2$. More generally, if we press the  button at vertex $j$ exactly $x_j$ times $(j=1,2,\ldots, n)$, then the resulting configuration of lights corresponds to the vector $b+A_Gx \bmod 2$ where $x=[x_i]$. Note that the ordering in which the buttons are pressed does not matter and 
pressing a button an even number of times is equivalent to not pressing the button at all. Thus,  the Lights Out!\ problem for initial vector $b$, graph $G$ and open neighborhoods  has a solution if and only if the system $A_Gx=b$ has a solution over $\mathbb{Z}_2$.

Let $r(A_G)$ and $\nu(A_G)$ denote the rank and nullity, respectively, of $A_G$ viewed as a matrix over $\mathbb{Z}_2$. Basic facts about linear systems over $\mathbb{Z}_2$ translate into simple facts about the Lights Outs!\ problem on $G$. Namely, the number of initial conditions that can be made to have their lights off is $2^{r(A_G)}$, and for each such initial configuration there are exactly $2^{\nu(A_G)}$ sets of vertices that can be pressed to toggle all lights to the off configuration (see \cite{S,FY}).

Similar statements apply to the Lights Out!\ problem on $G$ for closed neighborhoods; we simply replace $A_G$ by $A_G+I$ throughout. 

%
%
%
%

Let $G$ be a graph with vertices $1$, $2$, \ldots, $m$ and $H$ be a graph with vertices  $1,2,\ldots,n$.
The {\it Cartesian product} of $G$ and $H$ is the graph $G \square H$ with vertex set $\{(i,j): 1\leq i\leq m, 1 \leq j \leq n\}$
such that $(i,j)$ and $(k,\ell)$ are adjacent if and only if $i=k$ and $j$ is adjacent to $\ell$ in $H$, or $i$ is adjacent to $k$ 
in $G$ and $j=\ell$.  The recent paper \cite{GPY} posed conjectures about $\nu(A_{G\square H})$ and $\nu(A_{G\square H}+I)$.
We will prove
 these in Section 3. 

\section{Sylvester's equation}
Let $\mathbb{F}$  be a field, and let $A$ and $B$ be square matrices over $\mathbb{F}$ 
of orders $m$ and $n$ respectively, and $C$ be an $m \times n$ matrix over $\mathbb{F}$. 
The {\it Sylvester equation corresponding to $A$, $B$  and $C$}
is $AX-XB=C$ (e.g. see \cite{BR}).  Throughout, the basic  $p \times p$ Jordan matrix  corresponding to the eigenvalue $\gamma$ is denoted by $J(\gamma, p)$.  

Sylvester's equation arises naturally in the Lights Out!\ setting for Cartesian products of graphs. To see this, let $G$ be a graph on $m$ vertices with adjacency matrix $A$ and let $H$ be a graph on $n$ vertices with adjacency matrix 
$B$. We can view the vertices of $G\square H$ as the positions in an $m \times n$ array; the entry 
in the $(i,j)$ position corresponds to the vertex $i$ of $G$ and $j$ of $H$. As before, the entry $(i,j)$ is 1 if the light is on and 0 otherwise.  Let $E_{ij}$ be the $m \times n$
matrix with a 1 in position $(i,j)$ and $0$'s elsewhere.  Note that the $(k,\ell)$-entry of $AE_{ij}$ is $1$  if and only if
$\ell=j$ and $k$ is adjacent to $i$ in $G$.  Similarly the $(k,\ell)$-entry of $E_{ij}B$ equals $1$ if and only if 
$k=i$ and $\ell$ is adjacent to $j$ in $H$.  Thus, the matrix $AE_{ij} +E_{ij}B$, which is $AE_{ij}-E_{ij}B$ in $\mathbb{Z}_2$, records
the vertices of $G \square H$ that are changed due to pressing cell $(i,j)$ using open neighborhood switching. More generally, the 
configuration $C$ of lights can be turned off using open neighborhood switching if and only if  the system   
$AX-XB=C$ has a solution over $\mathbb{Z}_2$.

Sylvester's equation is well-studied, and in this section we recall some of the known results that will be useful in the 
Lights Out!\ context. It is known that $I_n\otimes A - B^T \otimes I_m$
is a matrix representation of the operator on the vector space $V$ of $m \times n$ matrices over $\mathbb{F}$ that sends $X \in V$ to $AX-XB$  (see \cite[Section 57.4]{B}).
Hence the nullity of $I_n\otimes A - B^T \otimes I_m$ is the dimension of the subspace 
\[ W=\{X \in V: AX=XB\}.\]
Let $\widehat{\mathbb{F}}$ be the algebraic closure of $\mathbb{F}$,  $\widehat{V}$ be the vector space of $m \times n$ matrices over the algebraic closure $\widehat{\mathbb{F}}$ of $\mathbb{F}$, and 
\[ \widehat{W}= \{X \in \widehat{V}: AX=XB\}.\]   While $W$ and 
$\widehat{W}$ are not necessarily equal, their dimensions 
are equal as these sets represent the solution space to a homogeneous system of equations with the same coefficient matrix
but over the field $\mathbb{F}$ and its extension $\widehat{\mathbb{F}}$. Both \cite[Chapter VIII, Section 3]{G} and \cite[Corollary to Theorem 2]{K}  prove the following formula for the  nullity of $I_n\otimes A - B^T \otimes I_m$ over the complexes. Their proofs immediately carry over to any algebraic closed field.

\begin{theorem}
\label{th:sum}
Let  $A$ and $B$ be square matrices over the field $\mathbb{F}$ with 
Jordan Canonical forms  over $\widehat{\mathbb{F}}$ 
\[ \oplus_{i=1}^k J(\lambda_i, m_i) \mbox{ and } 
\oplus_{j=1}^{\ell}  J(\mu_j, n_j),
\]
respectively.  Then 

\[
\nu (I_n\otimes A - B^T \otimes I_m)=  \sum_{i=1}^k \sum_{j=1}^{\ell} \delta_{\lambda_i, \mu_j} \min(m_i,n_j),
\]
where $\delta_{\cdot, \cdot}$ is the Kronecker delta.  
\end{theorem}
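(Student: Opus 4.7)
The plan is to reduce to the Jordan-form case and then analyze the Sylvester operator block by block. Since the paper already observes that the dimension of $W$ is the same over $\mathbb{F}$ as over $\widehat{\mathbb{F}}$, I may work entirely over $\widehat{\mathbb{F}}$ and identify the required nullity with $\dim \widehat{W}$.

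First I would use a change of basis to replace $A$ and $B$ by their Jordan forms. If $A = PJ_AP^{-1}$ and $B = QJ_BQ^{-1}$, then $AX = XB$ if and only if $J_A Y = Y J_B$ where $Y = P^{-1}XQ$. The map $X\mapsto Y$ is a linear bijection on $\widehat V$, so it suffices to compute the nullity when $A = \bigoplus_{i=1}^{k} J(\lambda_i,m_i)$ and $B = \bigoplus_{j=1}^{\ell} J(\mu_j,n_j)$. Writing $Y$ in the conformal block form $Y = [Y_{ij}]$ with $Y_{ij}$ of size $m_i \times n_j$, the equation $AY = YB$ decouples into the $k\ell$ independent equations
\[
J(\lambda_i,m_i)\,Y_{ij} \;=\; Y_{ij}\,J(\mu_j,n_j),
\]
so the total nullity is the sum of the dimensions of the solution spaces of these block equations. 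This reduces the theorem to the claim that, for any $\lambda,\mu\in\widehat{\mathbb{F}}$ and any $p,q\ge 1$, the space
\[
\mathcal{S}(\lambda,\mu,p,q)\;=\;\{Y\in \widehat{\mathbb{F}}^{p\times q}:\; J(\lambda,p)Y = YJ(\mu,q)\}
\]
has dimension $\min(p,q)$ if $\lambda=\mu$ and is $\{0\}$ otherwise.

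For the case $\lambda\neq\mu$, I would write $J(\lambda,p) = \lambda I + N_p$ and $J(\mu,q) = \mu I + N_q$ with $N_p, N_q$ the standard nilpotent shifts, and consider the Sylvester operator $T(Y) = J(\lambda,p)Y - YJ(\mu,q) = (\lambda-\mu)Y + (N_pY - YN_q)$. The operator $Y\mapsto N_pY - YN_q$ is nilpotent (as the sum of two commuting nilpotent operators on $\widehat{\mathbb{F}}^{p\times q}$, one acting by left-multiplication and the other by right-multiplication), so $T$ equals $(\lambda-\mu)I$ plus a nilpotent operator, hence is invertible when $\lambda\neq\mu$. Thus $\mathcal{S}(\lambda,\mu,p,q)=\{0\}$ in that case, contributing nothing to the sum.

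For the case $\lambda=\mu$, the equation reduces to $N_pY = YN_q$. Writing out entries, with the convention that $y_{p+1,j}=0$ and $y_{i,0}=0$, this becomes the recurrence $y_{i+1,j}=y_{i,j-1}$, which forces $Y$ to be constant along each diagonal $j-i = c$. The boundary equations coming from $i=p$ and $j=1$ force the diagonals with $c<0$ (from the first-column constraints) and with $c < n-m$ (from the last-row constraints) to vanish; a short bookkeeping check in the two cases $p\le q$ and $p>q$ shows that exactly $\min(p,q)$ diagonals remain free, giving $\dim \mathcal{S}(\lambda,\lambda,p,q)=\min(p,q)$. Summing the block contributions with the Kronecker delta recording when $\lambda_i=\mu_j$ yields the stated formula. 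The main obstacle is the last diagonal-counting step; it is elementary but requires care with the boundary conditions, and is what ultimately produces the $\min(m_i,n_j)$ term.
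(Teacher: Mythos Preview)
Your argument is correct and is essentially the classical proof: pass to the algebraic closure, conjugate $A$ and $B$ to Jordan form, observe that $AX=XB$ decouples into independent block equations $J(\lambda_i,m_i)Y_{ij}=Y_{ij}J(\mu_j,n_j)$, and then compute the intertwiner space for a single pair of Jordan blocks. The two key computations---that the Sylvester operator on a pair of blocks with distinct eigenvalues is invertible (because it is a nonzero scalar plus a nilpotent), and that for equal eigenvalues the commutant of two nilpotent shifts consists of the ``upper-triangular Toeplitz'' matrices of dimension $\min(p,q)$---are both handled correctly. One small slip: in the diagonal-counting step you write ``$c<n-m$'' where you mean ``$c<q-p$''; the bookkeeping you describe is otherwise fine.

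As for comparison with the paper: the paper does not actually give a proof of this theorem. It states the result and cites Gantmacher \cite{G} and Ku\v{c}era \cite{K} for the proof over $\mathbb{C}$, remarking only that ``their proofs immediately carry over to any algebraically closed field.'' Your write-up is precisely that classical argument, so you have supplied what the paper deferred to the literature.
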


Theorem \ref{th:sum} implies that 
\begin{equation}
\label{two} 
\mbox{dim}\;{W}= \sum_{i=1}^k \sum_{j=1}^{\ell} \delta_{\lambda_i, \mu_j} \min(m_i,n_j).
\end{equation}
This formula requires determining the Jordan Canonical Form of both $A$ and $B$ over 
the algebraic closure of $\mathbb{F}$.  In particular, this requires factoring polynomials.  As we see in 
the next section, the formula  is sufficiently strong to resolve the two conjectures in \cite{GKTZ}. 
We end this section by deriving  a similar formula that allows one to do all computations over $\mathbb{F}$ and avoid factorization. We believe this formulation will be more convenient for future research.

First we need to introduce some terminology and recall some classic results which can be found in Chapter 6 of \cite{H}.
  The {\it characteristic matrix}
of $A$ is $xI-A$.  It is known that there exist matrices $U$ and $V$ (whose entries are polynomials in $x$ over $\mathbb{F}$
such that both $\det U$ and $\det V$ are nonzero elements of $\mathbb{F}$) such that $UAV$ has the form 
 \[ S={\rm diag}(s_1(x), s_2(x) ,\ldots, \ldots, s_m(x)), \]
where each  $s_i(x)$ is a monic polynomial and $s_{\ell}(x)$ divides $s_{\ell+1}(x)$
for $k=1, \ldots, m-1$.   The matrix $S$ is the {\it Smith Normal Form} of $xI-A$, 
is unique, and can be determined from $xI-A$ using only the Euclidiean Algorithm for finding gcd's of polynomials. 
The $s_i(x)$ are called the {\it invariant factors of $xI-A$}. Let $p_1(x), \ldots, p_u(x)$ be the distinct irreducible factors of  the characteristic polynomial of $c_A(x)=s_1(x)s_{2}(x) \cdots s_m(x)$.
Then there exist nonnegative integers $e_i(j)$ such that 
\[ s_i(x)= p_1(x)^{e_1(i)} {p_2(x)^{e_2(i)}} \cdots p_u(x)^{e_u(i)} \quad \mbox{for $i=1, \ldots, m$.}
\]

Now we do the same for the $n \times n$ matrix $B$.  Take the Smith Normal Form of 
$B$ to be 
\[ T={\rm diag}(t_1(x), t_{2} (x), \ldots, t_n(x)), \]
where each of the $t_i(x)$ is a  monic polynomial and $t_{\ell}(x)$ divides $t_{\ell+1}(x)$
for $\ell=1, \ldots, n-1$. 
Let $q_1(x), \ldots, q_v(x)$ be the distinct irreducible factors of $c_B(x)=t_{1} (x)t_{2}(x) \cdots t_n(x)$.
Then there exist nonnegative integers $f_i(j)$ such that 
\[
t_i(x)= q_1(x)^{f_1(i)} q_2(x)^{f_2(i)} \cdots q_t(x)^{f_v(i)} \quad \mbox{
for $i=1, \ldots, n$.}
\]

If $\mathbb{F}$ is separable, which is the case when $\mathbb{F}$ is finite or  of characteristic 0,  and $\lambda$
is an eigenvalue of $A$ over $\widehat{F}$, then the sizes of the Jordan blocks in the Jordan Canonical Form of $A$
corresponding to $\lambda$ are the nonzero $e_j(i)$ ($i=1,\ldots, m$), where  $j$ is the unique 
index such that $\lambda$ is a root of $p_j(x)$. Similarly, if $\mu$ is an eigenvalue of $B$, then  the sizes of the Jordan blocks in the Jordan Canonical Form of $B$
corresponding to $\mu$ are the nonzero $f_j(i)$ ($i=1, \ldots, n$) for which 
$\mu$ is a root of $q_j(x)$.

\begin{corollary}
\label{SNF}
Let $A$ and $B$ be $m \times m$ and $n\times n$ matrices, respectively, over the separable field $\mathbb{F}$
with invariant factors $s_1(x), \ldots, s_m(x)$, and $t_{1}(x), \ldots, t_{n}(x)$, respectively. 
Then the nullity of $I\otimes A - B^T \otimes I$, is given by 
\[
\sum_{i=1}^m\sum_{j=1}^n {\rm deg} ( {\rm gcd}( s_i(x),t_j(x)).
\]
\end{corollary}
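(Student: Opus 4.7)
The plan is to reduce Corollary \ref{SNF} to Theorem \ref{th:sum} by translating between the Jordan Canonical Form data (which lives over $\widehat{\mathbb{F}}$) and the invariant factor data (which lives over $\mathbb{F}$), using separability as the bridge.

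First I would fix notation as in the paragraphs preceding the statement: write
\[ s_i(x) = \prod_{\alpha=1}^{u} p_\alpha(x)^{e_\alpha(i)}, \qquad t_j(x) = \prod_{\beta=1}^{v} q_\beta(x)^{f_\beta(j)}, \]
and observe that for each pair $(i,j)$,
\[ \gcd(s_i(x),t_j(x)) = \prod_{p_\alpha = q_\beta} p_\alpha(x)^{\min(e_\alpha(i),\,f_\beta(j))}, \]
so that
\[ \deg \gcd(s_i(x),t_j(x)) = \sum_{p_\alpha = q_\beta} \deg(p_\alpha)\,\min(e_\alpha(i),f_\beta(j)). \]
Summing over $i,j$ gives a target expression
\[ \sum_{i=1}^m \sum_{j=1}^n \deg \gcd(s_i,t_j) = \sum_{p_\alpha = q_\beta} \deg(p_\alpha) \sum_{i,j} \min(e_\alpha(i),f_\beta(j)). \]

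Next I would evaluate the left-hand side of the desired identity via Theorem \ref{th:sum}. Since $\mathbb{F}$ is separable, each irreducible $p_\alpha(x)$ splits over $\widehat{\mathbb{F}}$ into $\deg(p_\alpha)$ \emph{distinct} roots $\lambda$, and by the structural result recalled just before the corollary, the sizes of the Jordan blocks of $A$ at any such $\lambda$ are precisely the nonzero values among $e_\alpha(1),\ldots,e_\alpha(m)$. The analogous statement holds for $B$, $q_\beta$, $f_\beta$. Thus in the Kronecker-delta sum of Theorem \ref{th:sum}, a block of $A$ at $\lambda$ and a block of $B$ at $\mu$ contribute only when $\lambda=\mu$, which forces $\lambda$ (and $\mu$) to be a root of some common irreducible polynomial $p_\alpha = q_\beta$. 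Grouping the contributions by this common irreducible factor and then by its $\deg(p_\alpha)$ distinct roots,
\[ \nu(I_n\otimes A - B^T\otimes I_m) = \sum_{p_\alpha = q_\beta} \deg(p_\alpha) \sum_{i,j} \min(e_\alpha(i),f_\beta(j)), \]
where the outer sum extends over all pairs with $p_\alpha=q_\beta$ and the inner double sum runs over all $i,j$ (terms with $e_\alpha(i)=0$ or $f_\beta(j)=0$ contribute zero). Comparing this with the expression for $\sum_{i,j}\deg\gcd(s_i,t_j)$ derived above completes the proof.

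The one step requiring genuine care is the second one: justifying that each root of $p_\alpha$ contributes the \emph{same} Jordan block sizes, so that one can simply multiply by $\deg(p_\alpha)$. This is exactly where separability enters, ensuring the irreducible factors of the characteristic matrix have no repeated roots in $\widehat{\mathbb{F}}$ and that the invariant factor decomposition faithfully tracks the Jordan structure at each Galois conjugate. Once this bookkeeping is nailed down, the rest is just rearranging sums.
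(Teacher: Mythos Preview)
Your proposal is correct and follows essentially the same route as the paper's proof: factor the invariant factors into irreducibles, express $\deg\gcd(s_i,t_j)$ as $\sum_{p_\alpha=q_\beta}\deg(p_\alpha)\min(e_\alpha(i),f_\beta(j))$, and then match this against Theorem~\ref{th:sum} using separability to replace $\deg(p_\alpha)$ by a sum over the distinct roots of $p_\alpha$ in $\widehat{\mathbb{F}}$. The paper carries this out as a single chain of equalities rather than computing both sides separately, but the argument is the same.
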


\begin{proof}
Let $\Lambda_k$ be the set of roots of $p_k(x)$ over $\widehat{\mathbb{F}}$, 
$\Gamma_{\ell}$ be the set of roots of $q_{\ell}(x)$ over $\widehat{\mathbb{F}}$, $\Lambda= \cup_{k=1}^m \Lambda_k$ and 
$\Gamma= \cup_{\ell=1}^n \Gamma_{\ell}$. Then
\begin{eqnarray*}
\sum_{i=1}^m\sum_{j=1}^n \mbox{\rm deg}(\mbox{\rm gcd}(s_i(x), t_j(x))) &=&
{\sum_{i=1}^m\sum_{j=1}^n \mbox{\rm deg}(\mbox{\rm gcd}
(
 \prod_{k=1}^u p_k(x)^{e_i(k)}, 
 \prod_{\ell=1}^v q_{\ell}(x)^{f_j(\ell)}
)}
\\
& = &
{\sum_{i=1}^m\sum_{j=1}^n  \sum_{k=1}^u \sum_{\ell=1}^v
\mbox{\rm deg}(\mbox{\rm gcd} (p_k(x)^{e_i(k)}, q_{\ell}(x)^{f_j(\ell)}
)}\\
&= & 
\sum_{i=1}^m\sum_{j=1}^n  \sum_{k=1}^u \sum_{\ell=1}^v
\delta_{p_k(x),q_{\ell}(x)} \mbox{\rm deg} (p_k(x))\min (e_i(k),f_j(\ell))
\\ 
&= &
\sum_{i=1}^m\sum_{j=1}^n  \sum_{k=1}^u \sum_{\ell=1}^v
\sum_{\lambda \in \Lambda_k \cap \Gamma_{\ell}}  \min (e_i(k),f_j(\ell))\\
&=& 
 \sum_{k=1}^u 
\sum_{\ell=1}^v 
\sum_{\lambda  \in \Lambda_k}
\sum_{\mu \in \Gamma_{\ell}} 
\delta_{\lambda, \mu}  \sum_{i=1}^m\sum_{j=1}^n 
 \min (e_i(k),f_j(\ell)).
\end{eqnarray*}
Noting that the $e_i(k)$ and $f_j(\ell)$ 
are the sizes of the Jordan blocks of $A$, respectively $B$, corresponding to the eigenvalues of $\lambda$ and $\mu$ 
respectively, the result follows from Theorem \ref{th:sum}.

\end{proof}

\begin{corollary}
\label{SNF3}
Let $A$  be an $m \times m$ matrix over the separable field $\mathbb{F}$
with invariant factors $s_1(x)$, \ldots, $s_m(x)$.
Then the nullity of $I\otimes A - A^T \otimes I$ is given by 
\[
\sum_{i=1}^m (2m-2i+1) \mbox{\rm deg}(s_i(x))
\]
\end{corollary}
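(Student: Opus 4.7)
The plan is to derive Corollary \ref{SNF3} as a direct specialization of Corollary \ref{SNF} to the case $B=A$, and then to exploit the divisibility structure of the invariant factors to simplify the resulting double sum into a single weighted sum.

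First I would apply Corollary \ref{SNF} with $B=A$, which immediately gives
\[
\nu(I \otimes A - A^T \otimes I) \;=\; \sum_{i=1}^m \sum_{j=1}^m \deg\bigl(\gcd(s_i(x), s_j(x))\bigr).
\]
The next step is to use the defining property of invariant factors, namely $s_1(x) \mid s_2(x) \mid \cdots \mid s_m(x)$. This divisibility chain implies that for any pair of indices $i$ and $j$, $s_{\min(i,j)}(x)$ divides $s_{\max(i,j)}(x)$, and therefore
\[
\gcd(s_i(x), s_j(x)) \;=\; s_{\min(i,j)}(x).
\]
Consequently, $\deg(\gcd(s_i, s_j)) = \deg(s_{\min(i,j)})$.

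The final step is a counting argument: for each fixed $k \in \{1,\dots,m\}$, I would count the number of ordered pairs $(i,j) \in \{1,\dots,m\}^2$ with $\min(i,j) = k$. Such pairs split into those with $i=k$ and $j \geq k$ (contributing $m-k+1$ pairs) together with those with $j=k$ and $i > k$ (contributing $m-k$ pairs), for a total of $2m-2k+1$. Substituting this count back yields
\[
\nu(I \otimes A - A^T \otimes I) \;=\; \sum_{k=1}^m (2m-2k+1)\deg(s_k(x)),
\]
which, after renaming the summation index, is exactly the claimed formula.

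There is no real obstacle here; the only subtlety is verifying that the coefficient $2m-2i+1$ (rather than, say, $2(m-i+1)$ or $2(m-i)+1$ in another convention) comes out correctly from the counting, which is why I would write out the count of pairs with $\min(i,j)=k$ explicitly before collapsing the sum.
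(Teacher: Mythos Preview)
Your proof is correct and follows exactly the approach taken in the paper: apply Corollary~\ref{SNF} with $B=A$, use the divisibility chain of invariant factors to identify $\gcd(s_i,s_j)=s_{\min(i,j)}$, and collapse the double sum. The paper's proof is simply terser, leaving the counting of pairs with $\min(i,j)=k$ implicit where you have written it out explicitly.
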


\begin{proof}
Since $s_i(x)$ divides {$s_{i+1}(x)$} for {$i=1,\ldots, m-1$}, 
$\mbox{gcd}(s_i(x),s_j(x))=s_i(x)$ when {$i\leq j$}. 
The result now follows from Corollary \ref{SNF}.
\end{proof}

If $A$ is non-derogatory (that is, its minimal and characteristic polynomial are equal), 
then it has only one invariant factor not equal to $1$, and hence the formula in Corollary \ref{SNF}
simplifies to 
\[
\sum_{j=1}^n \mbox{deg} (\mbox{gcd} (c_{A}(x), t_j(x)).\]
 In particular, we have the 
following result for Cartesian products involving a path $P_m$.
 
\begin{corollary}
\label{SNF2}
Let $G$ be a graph and let $(s_1, \ldots, s_n)$ be the invariant factors of $xI-A_G$ over $\mathbb{Z}_2$.
Then the nullity of the adjacency matrix of $P_m \square G$ equals
\[ {\sum_{i=1}^n \mbox{\rm deg}( \mbox{\rm gcd}(c_{P_m}(x), s_i(x)).}
\]
\end{corollary}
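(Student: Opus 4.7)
The plan is to apply Corollary \ref{SNF} to $A = A_{P_m}$ and $B = A_G$ over $\mathbb{F} = \mathbb{Z}_2$, which is perfect and hence separable. By the discussion at the start of Section~2, the adjacency matrix of $P_m \square G$ realizes the Sylvester operator $X \mapsto A_{P_m}X - XA_G$ on $m\times n$ matrices over $\mathbb{Z}_2$ (using that $A_G$ is symmetric and that $+$ coincides with $-$ in $\mathbb{Z}_2$), so $\nu(A_{P_m \square G})$ equals the nullity of $I_n\otimes A_{P_m}-A_G^T\otimes I_m$, to which Corollary \ref{SNF} directly applies.

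The second ingredient is the reduction the paper already highlights in the remark just before the corollary: if $A$ is non-derogatory, then its invariant factors satisfy $s_1(x)=\cdots=s_{m-1}(x)=1$ and $s_m(x) = c_A(x)$, so the double sum of Corollary \ref{SNF} collapses to $\sum_{j=1}^n \deg(\gcd(c_A(x), t_j(x)))$. Taking $A = A_{P_m}$ and writing $t_j = s_j$ (now denoting the invariant factors of $xI-A_G$, as in the corollary's statement) gives exactly the claimed formula. So the entire proof reduces to showing that $A_{P_m}$ is non-derogatory over $\mathbb{Z}_2$.

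To establish this, I would exhibit $e_1$ as a cyclic vector for $A_{P_m}$. A short induction on $k$, using that $A_{P_m}$ is tridiagonal with $1$'s on both the sub- and super-diagonals, shows that $A_{P_m}^{k}e_1$ is a linear combination of $e_1,\ldots,e_{k+1}$ with a nonzero coefficient on $e_{k+1}$: if $A_{P_m}^{k}e_1 = \sum_{i=1}^{k+1} c_i e_i$ with $c_{k+1}\ne 0$, then applying $A_{P_m}$ one more time yields $c_{k+1}$ as the coefficient of $e_{k+2}$. Hence $e_1, A_{P_m}e_1, \ldots, A_{P_m}^{m-1}e_1$ are linearly independent, so $e_1$ is cyclic and $A_{P_m}$ is non-derogatory, as required.

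I do not expect any substantive obstacle: the cyclicity argument is routine and works verbatim over $\mathbb{Z}_2$ because the off-diagonal entries $1\in\mathbb{Z}_2$ are nonzero, and the rest is essentially invoking Corollary \ref{SNF} together with the non-derogatory simplification the paper has already singled out. The only point worth being careful about is the identification of the adjacency matrix of $P_m \square G$ with the Sylvester operator's matrix, but this is precisely what Section~2 sets up.
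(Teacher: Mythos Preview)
Your proposal is correct and follows essentially the same route as the paper: show that $A_{P_m}$ is non-derogatory and then invoke Corollary~\ref{SNF} (together with the simplification for non-derogatory $A$ stated just before the corollary). The only difference is in the sub-argument for non-derogatory: you exhibit $e_1$ as a cyclic vector via the tridiagonal induction, whereas the paper observes that deleting the first row and last column of $A_{P_m}$ leaves a lower-unitriangular matrix of determinant~$1$, forcing every eigenvalue to have geometric multiplicity~$1$; both are short, standard, and work over $\mathbb{Z}_2$.
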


\begin{proof}
The submatrix obtained from $A_{P_m}$ by deleting its first row and last column has 
determinant $1$. Hence the geometric multiplicity of each eigenvalue of $A_{P_m}$ is $1$.
This implies that the Jordan Canonical Form of $A_{P_m}$ has exactly one block for each 
eigenvalue, and hence $A_{P_m}$ is non-derogatory.
The result now follows from Corollary \ref{SNF}.
\end{proof}

We conclude {this section} with a few simple examples. 

\begin{example}
Consider an $n \times n$ matrix $A$ and an $m \times m$ matrix $B$, each nilpotent over $\mathbb{F}=\mathbb{Z}_2$ and with 
Jordan Canonical Forms 
\[
\big(\oplus_{i=1}^{n-3} J(0, 1)\big)\oplus J(0,3) \mbox{ and } 
\big(\oplus_{j=1}^{m-3}  J(0, 1)\big)\oplus J(0,3),
\]
respectively.  By Corollary $\ref{SNF}$ the nullity of $I \otimes A - B^T \otimes I$ over $\mathbb{Z}_2$ is given by

\[
\sum_{i=1}^{n-3}\sum_{j=1}^{m-3}\min(1,1)+\sum_{j=1}^{m-3}\min(3,1)+\sum_{i=1}^{n-3}\min(1,3)+\min(3,3)=(m-2)(n-2)+2.
\]
It can be verified that the adjacency matrix of the star graph $S_n$ (as illustrated below in Figure $1$) on $n$ vertices, with $n$ odd, is of the form described above. Thus $\nu(S_n \square S_m)=(m-2)(n-2)+2$ for $n,m$ odd.




\end{example}

\begin{example}
Consider $S_n \square P_m$, and let $\mathbb{F}=\mathbb{Z}_2$. Let $\nu=\nu(A_{P_m})$. Then by Corollary $\ref{SNF2}$,

\[
\nu(S_n\square P_m)= \left\{ \begin{array}{ll}
 0 & \mbox{ if $\nu =0$,}\\
  (m-3)+\nu& \mbox{ if $1\leq \nu  \leq 3$,}  and  \\
 m & \mbox{ if $3<\nu$.} \end{array} \right.
 \]

\end{example}

\begin{center}
\begin{tikzpicture}[level/.style={sibling distance=20mm/#1}]
\node [vertex]{}
  child {
    node [vertex]  {}
  }
  child {
    node [vertex]  {}
  };
\end{tikzpicture} \qquad
\begin{tikzpicture}[level/.style={sibling distance=20mm/#1}]
\node [vertex]{}
  child {
    node [vertex]  {}
  }
  child {
    node [vertex]  {}
  }
  child {
    node [vertex]  {}
  }
   child {
    node [vertex]  {}
  };
\end{tikzpicture}\\
{Figure 1: The star graphs $S_3$ and $S_5$}
\end{center}


\begin{example}  
Let $H$ be the Petersen graph.  The Smith Normal Form of $xI-A_H$ over $\mathbb{Z}_2$ 
is 
\[ {S}={{\rm diag}}(1,1,1,1,1,1,(x+1),(x+1)x,(x+1)x,(x+1)x,(x+1)^2x).
\]
Hence, by Corollary $\ref{SNF}$, the nullity of {$A_H\square A_H$} is 
\[ \sum_{i=1}^{10} \mbox{\rm deg}({{\rm gcd}}(s_i,s_j)) =42.
\]
\end{example}

\section{Proof of conjectures}
Our proof depends on the following simple result about partitions, and Theorem \ref{th:sum}. A {\it partition of the nonnegative integer $r$} is a tuple $\pi=(\pi_1, \ldots, \pi_k)$ of positive 
integers  with $r=\pi_1+ \cdots + \pi_k$. 

\begin{lemma}
\label{partition}
Let $\pi=(\pi_1, \ldots, \pi_k)$ and $\tau=(\tau_1, \ldots, \tau_{\ell})$ be partitions of $r$ and $s$ respectively. Then 
\begin{equation}
\label{one} 
\sum_{i=1}^k \sum_{j=1}^{\ell} \min(\pi_i, \tau_j) \geq \min(r,s).
\end{equation}
\end{lemma}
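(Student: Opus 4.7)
My plan is to prove the inequality by bounding each inner sum $\sum_{j=1}^{\ell}\min(\pi_i,\tau_j)$ from below and then summing on $i$. By the symmetry of $\min$ in its two factors and of the hypothesis in $\pi,\tau$, I may assume without loss of generality that $r\leq s$, so that $\min(r,s)=r$. It then suffices to establish, for each fixed $i$, the bound
\[
\sum_{j=1}^{\ell}\min(\pi_i,\tau_j)\;\geq\;\pi_i,
\]
since summing this over $i=1,\dots,k$ gives $\sum_{i,j}\min(\pi_i,\tau_j)\geq \sum_i \pi_i = r=\min(r,s)$.

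To prove the inner bound I would split into two cases. First, if some part $\tau_{j_0}$ satisfies $\tau_{j_0}\geq \pi_i$, then $\min(\pi_i,\tau_{j_0})=\pi_i$, and since every other term in the sum is nonnegative the inner sum is already at least $\pi_i$. Second, if $\tau_j<\pi_i$ for every $j$, then $\min(\pi_i,\tau_j)=\tau_j$ for all $j$, and hence
\[
\sum_{j=1}^{\ell}\min(\pi_i,\tau_j)\;=\;\sum_{j=1}^{\ell}\tau_j\;=\;s\;\geq\;r\;\geq\;\pi_i,
\]
where the last inequality uses that $\pi_i$ is one of the parts of a partition of $r$. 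This disposes of both cases and completes the argument.

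The proof is essentially an exercise in a two-case dichotomy, so I do not expect any real obstacle; the only subtlety is making sure the reduction to the case $r\leq s$ is justified, but the statement is manifestly symmetric in the two partitions, which legitimizes the WLOG. The key conceptual content is that for each $\pi_i$, either a single $\tau_j$ already "covers" it, or the entire sum of $\tau_j$'s is forced to be at least $\pi_i$ because $s\geq r\geq\pi_i$.
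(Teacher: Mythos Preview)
Your proof is correct and is essentially identical to the paper's own argument: the same WLOG reduction to $r\leq s$, the same two-case dichotomy on whether some $\tau_j\geq\pi_i$, and the same summation over $i$ to conclude.
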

\begin{proof}
Without loss of generality we may assume that $s\geq r$.
Consider an $i$.  If there exists a $j$ such that $\tau_j \geq \pi_i $, then 
$\sum_{j=1}^{\ell} \mbox{min}(\pi_i, \tau_j) \geq \pi_i$. Otherwise,
\[ \sum_{j=1}^{\ell} \mbox{min}(\pi_i, \tau_j) = \sum_{j=1}^{\ell} \tau_j= s\geq r \geq \pi_i. \]
Hence for each $i$, we have $\sum_{j=1}^{\ell} \mbox{min}(\pi_i, \tau_j) \geq \pi_i$.
The result now follows by summing these inequalities over $i=1,\ldots, k$.
 \end{proof}

We note, but don't make use of, the fact that equality holds in (\ref{one}) if and only if 
 $k=1$ or $\ell=1$, and $r=s$.

Let $\lambda$ be an eigenvalue of $A$, and let $S_{\lambda}= \{(j,k): \lambda_j=\lambda=\mu_{k} \}$.
Let $\alpha_{A}(\lambda)$ and $\alpha_{B}(\lambda)$ be the algebraic multiplicity of $\lambda$ as an eigenvalue of
$A$ and $B$ respectively.  Note that  the contribution to $\mbox{dim}\; W$ in (\ref{two}) corresponding 
to $\lambda$ is given by 
\[
\sum_{ (j,k) \in S_{\lambda}} \min(m_j, n_k) .
\]
By Lemma \ref{partition}, this contribution is at least $\min(\alpha_{A}(\lambda), \beta_B(\lambda))$.  This last quantity 
is the multiplicity of $\lambda$ as a root of $\mbox{gcd}(c_{A}(x), c_{B}(x))$. 
Hence we have proven the following result. 

\begin{corollary}
\label{main}
Let $A$ and $B$ be square matrices over the field $\mathbb{F}$ of order $m$ and $n$ respectively. 
Then the nullity of {$I_n \otimes A - B^T \otimes I_m$} is at least the degree of the greatest common 
divisor of $c_A(x)$ and $c_B(x)$.
\end{corollary}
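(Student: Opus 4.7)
The plan is to apply Theorem \ref{th:sum} and then aggregate its terms eigenvalue by eigenvalue, using Lemma \ref{partition} to get a lower bound from the pair of partitions arising at each common eigenvalue. First I would pass to the algebraic closure $\widehat{\mathbb{F}}$, which is harmless since the nullity of the coefficient matrix is unchanged; write the Jordan canonical forms of $A$ and $B$ over $\widehat{\mathbb{F}}$ as $\oplus_{i=1}^k J(\lambda_i, m_i)$ and $\oplus_{j=1}^\ell J(\mu_j, n_j)$.

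Next, I would regroup the double sum $\sum_i \sum_j \delta_{\lambda_i, \mu_j} \min(m_i, n_j)$ from Theorem \ref{th:sum} according to the common value $\lambda = \lambda_i = \mu_j$. For a fixed eigenvalue $\lambda$ of both $A$ and $B$, the Jordan block sizes $(m_i)_{\lambda_i = \lambda}$ form a partition of the algebraic multiplicity $\alpha_A(\lambda)$, and likewise $(n_j)_{\mu_j = \lambda}$ is a partition of $\alpha_B(\lambda)$. Applying Lemma \ref{partition} to these two partitions shows that the contribution of $\lambda$ to the nullity is at least $\min(\alpha_A(\lambda), \alpha_B(\lambda))$, which is precisely the multiplicity of $\lambda$ as a root of $\gcd(c_A(x), c_B(x))$.

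Summing over the eigenvalues common to $A$ and $B$ (eigenvalues of only one of the two matrices contribute nothing to either side) yields
\[
\nu(I_n \otimes A - B^T \otimes I_m) \;\geq\; \sum_{\lambda} \min(\alpha_A(\lambda), \alpha_B(\lambda)) \;=\; \deg \gcd(c_A(x), c_B(x)),
\]
which is the desired inequality. There is really no hard step here: the substantive content is packed into Lemma \ref{partition}, and the only care required is the bookkeeping that the indices $(i,j)$ with $\lambda_i = \mu_j = \lambda$ correspond exactly to the Jordan blocks of $A$ and $B$ attached to $\lambda$, so that their sizes are genuine partitions of the two algebraic multiplicities to which the lemma can be applied.
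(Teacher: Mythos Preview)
Your proposal is correct and follows essentially the same approach as the paper: regroup the sum from Theorem~\ref{th:sum} by common eigenvalue, observe that the Jordan block sizes at a fixed $\lambda$ partition the algebraic multiplicities $\alpha_A(\lambda)$ and $\alpha_B(\lambda)$, apply Lemma~\ref{partition} to bound each contribution below by $\min(\alpha_A(\lambda),\alpha_B(\lambda))$, and sum. Your write-up is in fact slightly more explicit about the passage to $\widehat{\mathbb{F}}$ and the bookkeeping than the paper's, but the argument is the same.
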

 
The following  corollaries prove Conjectures  4.1 and 4.2 of \cite{GPY}.
\begin{corollary}
Let $G$ and $H$ be graphs with adjacency matrices $A$ and $B$ respectively,  and $\mathbb{F}$ be a field of characteristic $2$.
Then the nullity of  $A_{G \square H}$ is at least the degree of the 
greatest common divisor of $c_A(x)$ and $c_B(x)$ over $\mathbb{F}$.
\end{corollary}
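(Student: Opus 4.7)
The plan is to reduce the statement essentially immediately to Corollary \ref{main}. The first step is to recall the discussion at the start of Section 2: if one views a configuration of lights on $G \square H$ as an $m \times n$ matrix over $\mathbb{F}$, then pressing the button at cell $(i,j)$ using open neighborhood switching toggles exactly the entries recorded by $AE_{ij} + E_{ij}B$. In characteristic $2$ this equals $AE_{ij} - E_{ij}B$, so pressing a collection of buttons encoded by a matrix $X$ produces the net change $AX - XB$. Thus the adjacency operator of $G \square H$ is, after vectorization, precisely the Sylvester operator $\mathcal{L}:X \mapsto AX - XB$.

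The second step is to match nullities. As noted just before Theorem \ref{th:sum}, the operator $\mathcal{L}$ is represented by the matrix $I_n \otimes A - B^T \otimes I_m$ with respect to the basis of matrix units. In particular,
\begin{equation*}
\nu(A_{G \square H}) \;=\; \nu\bigl(I_n \otimes A - B^T \otimes I_m\bigr).
\end{equation*}
Applying Corollary \ref{main} with these $A$ and $B$ yields that this nullity is at least $\deg\bigl(\gcd(c_A(x),c_B(x))\bigr)$, which is exactly what must be shown.

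I do not foresee any real obstacle here: the hard work has already been compressed into Theorem \ref{th:sum} and Lemma \ref{partition}, which together gave Corollary \ref{main}. The only points that deserve a sentence of comment are (i) that the hypothesis ``characteristic $2$'' is used exactly to identify $AE_{ij}+E_{ij}B$ with $AE_{ij}-E_{ij}B$, so that the adjacency operator of $G \square H$ really is a Sylvester operator, and (ii) that Corollary \ref{main} was proved over an arbitrary field $\mathbb{F}$ (the eigenvalue bookkeeping in its proof happens over $\widehat{\mathbb{F}}$, but the conclusion is about $\deg(\gcd(c_A,c_B))$, a quantity defined over $\mathbb{F}$), so the statement for general characteristic-$2$ fields — not just $\mathbb{Z}_2$ — follows without any additional argument.
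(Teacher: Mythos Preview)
Your proposal is correct and follows essentially the same approach as the paper: both identify $A_{G\square H}$ with the Sylvester/Kronecker-sum matrix and then invoke Corollary~\ref{main}. The paper is slightly more direct, simply writing $A_{G\square H}=A\otimes I+I\otimes B=I\otimes B-A^{T}\otimes I$ (using characteristic~$2$ and the symmetry of $A$) rather than re-deriving this via the operator discussion of Section~2, but the substance is identical.
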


\begin{proof}
Note that the adjacency matrix of $G \square H$  is $A\otimes I + I\otimes B$, 
which is {$I\otimes B - A^T\otimes I$} since $\mathbb{F}$ has characteristic $2$ {and $A$ is symmetric}.
The result now follows from Corollary \ref{main}.
\end{proof}

\begin{corollary}
Let $G$ and $H$ be graphs with adjacency matrices $A$ and $B$ respectively,  and $\mathbb{F}$ be a field of characteristic $2$.
Then the nullity of  $A_{G \square H}+I$ is at least the degree of the 
greatest common divisor of $c_A(x+1)$ and $c_B(x)$ over $\mathbb{F}$.
\end{corollary}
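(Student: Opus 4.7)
The plan is to mimic the proof of the immediately preceding corollary, with the twist that we first absorb the identity matrix $I_{mn}$ into one of the tensor factors. Using $I_{mn} = I_m \otimes I_n$, I would write
\[
A_{G\square H} + I = A\otimes I_n + I_m \otimes B + I_m \otimes I_n = (A+I_m)\otimes I_n + I_m \otimes B.
\]
Thus $A_{G\square H}+I$ has the same structural form as $A_{G\square H}$ treated in the previous corollary, except that the first factor $A$ is replaced by $A+I_m$. Note that $A+I_m$ remains symmetric.

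Next, I would apply the previous corollary's reasoning verbatim: in characteristic $2$, the expression $(A+I_m)\otimes I_n + I_m\otimes B$ equals $I_m\otimes B - (A+I_m)^T \otimes I_n$, which is a Sylvester matrix representation. Invoking Corollary \ref{main} with the pair $(A+I_m,B)$ in place of $(A,B)$ then shows that
\[
\nu(A_{G\square H}+I) \geq \deg\gcd\bigl(c_{A+I_m}(x), c_B(x)\bigr).
\]

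Finally, I would compute the shifted characteristic polynomial: $c_{A+I_m}(x) = \det(xI - A - I_m) = \det((x-1)I - A) = c_A(x-1)$, and in characteristic $2$ this equals $c_A(x+1)$. Substituting gives the desired lower bound $\deg\gcd(c_A(x+1),c_B(x))$.

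The main obstacle is essentially bookkeeping: one has to verify that the identity can be absorbed cleanly into a single tensor factor (which works because $I_{mn}$ factors as $I_m\otimes I_n$), and that the shift $A\mapsto A+I$ produces the expected substitution $x\mapsto x-1 = x+1$ in the characteristic polynomial. Given these checks, the result reduces immediately to the preceding corollary applied to $(A+I_m, B)$.
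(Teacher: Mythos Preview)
Your proposal is correct and matches the paper's proof essentially line for line: absorb $I=I_m\otimes I_n$ into the first tensor factor to obtain $(A+I)\otimes I + I\otimes B$, rewrite in characteristic~$2$ as $I\otimes B - (A+I)^T\otimes I$, apply Corollary~\ref{main}, and use $c_{A+I}(x)=c_A(x+1)$. The only difference is that you spell out the intermediate computation $c_{A+I}(x)=c_A(x-1)=c_A(x+1)$, which the paper states without justification.
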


\begin{proof}
Note that  
 $A_{G \square H}+I$  is $(A+I)\otimes I + I\otimes B$, 
which is {$I\otimes B - (A+I)^T\otimes I$} since $\mathbb{F}$ has characteristic $2$ {and $A + I$ is symmetric}.
The result follows from Theorem \ref{main} and the observation that 
$c_{A+I}(x)= c_{A}(x+1)$.
\end{proof}

\bibliographystyle{plain}

\begin{thebibliography}{3}
\bibitem{B}[B] P.~Benner, {\it Handbook of Linear Algebra}, (Leslie Hogben, Ed.), CRC Press, 2006.
\bibitem{BR}[BR] Bhatia, R.; Rosenthal, P.  How and why to solve the operator equation $AX-XB=Y$? Bull. London Math. Soc. 29:1?21, 1997.
\bibitem{G}[G]  F.R.~Gantmacher, {\it The Theory of Matrices}, Volume 1, Chelsea Publishing, 1959.
\bibitem{GKTZ}[GKTZ] J. ~Goldwasser, W. ~Klostermeyer, G. ~Trapp, C.-Q. ~Zhang, Setting Switches on a Grid, {\it Technical Report {\rm TR-95-20}}, Dept. of Statistics and Computer Science, West Virginia University, 1995.
\bibitem{GPY}[GPY] J.~Goldwasser, T.~Peters, M.~Young,  Lights Out!\ on Cartesian Products, Electronic Journal of Linear
Algebra, Volume 32, pp. 464-474.
\bibitem{H}[H] L. Hogben, {\it Handbook of Linear Algebra}, CRC Press, 2nd Edition, 2006. 
\bibitem{S}[S] K. ~Sutner, Linear cellular automata and the Garden-of-Eden. {\it The Mathematical Intelligencer} 1989.
\bibitem{FY}[FY] Fleischer R., Yu J. (2013) A Survey of the Game Lights Out!. In: Brodnik A., Lopez-Ortiz A., Raman V., Viola A. (eds) Space-Efficient Data Structures, Streams, and Algorithms. Lecture Notes in Computer Science, vol 8066. Springer, Berlin, Heidelberg.
\bibitem{K}[K]  V. Ku\v{c}era, The Matrix Equation $AX+XB=C$, SIAM J. Appl. Math., Volume 26, 1974, pp. 15-25. 
\end{thebibliography}

\end{document}